\renewcommand\nomgroup[1]{%
  \item[\bfseries
  \ifstrequal{#1}{A}{Parameters}{%
  \ifstrequal{#1}{B}{Variables}{%
  \ifstrequal{#1}{C}{Subscripts}{%
  \ifstrequal{#1}{D}{Sets}{%
  \ifstrequal{#1}{E}{Operations}{%
  }}}}}%
]}
\newcommand{\tr}{\mathop{\rm tr}}
\newcommand{\vect}{\mathop{\rm vec}}
\newcommand{\mnorm}[1]{{\left\vert\kern-0.25ex\left\vert\kern-0.25ex\left\vert #1 
    \right\vert\kern-0.25ex\right\vert\kern-0.25ex\right\vert}}
\newtheorem{definition}{Definition}
\newtheorem{corollary}{Corollary}
\newtheorem{remark}{Remark}
\newtheorem{proposition}{Proposition}
\newtheorem{assumption}{Assumption}
\newtheorem{example}{Example}
\newcommand{\ie}{{\it i.e.}}
\def\BibTeX{{\rm B\kern-.05em{\sc i\kern-.025em b}\kern-.08em
    T\kern-.1667em\lower.7ex\hbox{E}\kern-.125emX}}
\begin{document}

\title{Vertiport Selection in Hybrid Air-Ground Transportation Networks via Mathematical Programs with Equilibrium Constraints
}

\author{Yue~Yu, Mengyuan~Wang, Mehran~Mesbahi, and Ufuk~Topcu
\thanks{Y. Yu and U. Topcu are with the Oden Institude of Computational Sciences and Engineering, The University of Texas at Austin, TX 78712 USA (emails: yueyu@utexas.edu, \,utopcu@utexas.edu). M. Wang and M. Mesbahi are with the Department of Aeronautics and Astronautics, University of Washington, Seattle, WA 98195 USA (emails: mywang10@uw.edu, \,mesbahi@uw.edu).}
}
\maketitle

\begin{abstract}
Urban air mobility is a concept that promotes aerial modes of transport in urban areas. In these areas, the location and capacity of the vertiports--where the travelers embark and disembark the aircraft--not only affect the flight delays of the aircraft, but can also aggravate the congestion of ground vehicles by creating extra ground travel demands. We introduce a mathematical model for selecting the location and capacity of the vertiports that minimizes the traffic congestion in hybrid air-ground transportation networks. Our model is based on a mathematical program with bilinear equilibrium constraints. Furthermore, we show how to compute a global optimal solution of this mathematical program by solving a mixed integer linear program. We demonstrate our results via the Anaheim transportation network model, which contains more than 400 nodes and 900 links.
\end{abstract}


\nomenclature[A,01]{\(n_\tau\)}{Number of different equilibria}
\nomenclature[A,02]{\(n_n\)}{Number of nodes}
\nomenclature[A,03]{\(n_v\)}{Number of candidate vertiports}
\nomenclature[A,04]{\(n_l\)}{Number of ground and air links}
\nomenclature[A,05]{\(n_d\)}{Number of destination nodes}
\nomenclature[A,06]{\(n_g\)}{Number of ground links}
\nomenclature[A,07]{\(n_a\)}{Number of air links}
\nomenclature[A,08]{\(n_c\)}{Number of options for vertiport capacity}
\nomenclature[A,09]{\(n_b\)}{Number of logical constraints}

\nomenclature[A,10]{\(\gamma\)}{Budget for selecting vertiport capacity}
\nomenclature[A,11]{\(\mu\)}{A large positive scalar}
\nomenclature[A,12]{\(\omega\)}{Weighting parameter for selection cost}
\nomenclature[A,13]{\(\mathbf{1}_b\)}{The \(n_b\)-dimensional vector of all 1's}
\nomenclature[A,14]{\(\mathbf{1}_d\)}{The \(n_d\)-dimensional vector of all 1's}
\nomenclature[A,15]{\(\mathbf{1}_c\)}{The \(n_c\)-dimensional vector of all 1's}
\nomenclature[A,16]{\(\mathbf{1}_v\)}{The \(n_v\)-dimensional vector of all 1's}
\nomenclature[A,17]{\(c\)}{Free travel time vector}
\nomenclature[A,18]{\(f\)}{Link capacity vector}
\nomenclature[A,19]{\(w\)}{Value vector for unit vertiport capacity}
\nomenclature[A,20]{\(E\)}{Node-edge incidence matrix}
\nomenclature[A,21]{\(D\)}{vertiport incidence matrix}
\nomenclature[A,22]{\(S\)}{Source-sink matrix}
\nomenclature[A,23]{\(G\)}{Candidate capacity matrix}
\nomenclature[A,24]{\(K\)}{Cost matrix}

\nomenclature[B,01]{\(g\)}{Vertiport capacity vector variable}
\nomenclature[B,02]{\(p\)}{Dual variable for link capacity constraint}
\nomenclature[B,03]{\(q\)}{Dual variable for vertiport capacity constraint}
\nomenclature[B,04]{\(B\)}{Binary selection matrix}
\nomenclature[B,05]{\(U\)}{Dual variable for nonnegative flow constraint}
\nomenclature[B,06]{\(V\)}{Dual variable for flow conservation constraint}
\nomenclature[B,07]{\(X\)}{Flow matrix variable}
\nomenclature[B,08]{\(Y\)}{Auxiliary variable for linearizing constraints}

\nomenclature[C,01]{\([a]_j\)}{The \(j\)-th element of vector \(a\)}
\nomenclature[C,02]{\([A]_{ij}\)}{The \(ij\)-th element of matrix \(A\)}

\nomenclature[D,01]{\(\mathbb{R}^n\)}{The set of \(n\)-dimensional real vectors}
\nomenclature[D,02]{\(\mathbb{R}^{m\times n}\)}{The set of \(m\times n\) real matrices}
\nomenclature[D,03]{\(\{0, 1\}^n\)}{The set of \(n\)-dimensional binary vectors}
\nomenclature[D,03]{\(\{0, 1\}^{m\times n}\)}{The set of \(m\times n\) binary matrices}

\nomenclature[E,01]{\(\odot\)}{The Hadamard product }
\nomenclature[E,02]{\(\vect\)}{The vectorization function }

\printnomenclature
\section{Introduction}
\label{sec: introduction}

Urban Air Mobility (UAM) is a concept that promotes short-range aerial travel in urban areas \cite{garrow2021urban,sun2021operational}. By adding alternative air modes of transportation--mainly supported by electric vertical-take-off-and-landing (eVTOL) aircraft --to the existing ground transportation networks, UAM has the potential to alleviate ground traffic congestion. The latter has become a growing concern among travelers and transportation authorities alike \cite{venkatesh2020optimal}.

An integral part of the operation of eVTOL aircraft is to build vertiports where passengers or cargo embark and disembark the aircraft. Many UAM industries, including Ferrovial, Urban-Air Port Ltd., and Skyports, are actively investigating the possibility of ultra-compact, rapidly deployable, multi-functional vertiports for both manned and unmanned aircraft around the world \cite{aamreport}.

One challenge in UAM is to select the locations of the vertiports optimally among candidate options. The input of the selection includes a set of candidate vertiports (usually generated by clustering algorithms \cite{lim2019selection,rajendran2019insights,jeong2021selection}) and the budget of the total number of vertiports. The output of the selection is a set of selected vertiports that optimizes certain performance metric of the air transportation network supported by the selected vertiports, such as the savings in total travel time \cite{daskilewicz2018progress} and the package demand served by the aircraft \cite{german2018cargo}. 

Since the candidate locations for vertiports are often limited by safety, accessibility, and noise emission factors, most door-to-door travel in UAM will require transportation via both aircraft and ground vehicles  \cite{lim2019selection,rajendran2019insights,jeong2021selection,antcliff2016silicon}. Consequently, the vertiport locations can also affect the traffic in the existing ground transportation network by creating additional travel demands on the way to and from the vertiports. Recent studies investigate the potential of UAM as a complement to the ground transportation systems \cite{ploetner2020long}. However, how to select the vertiport locations by optimizing its impacts on the congestion in a network that allows both aerial and ground modes of transport is, to our best knowledge, still an open question.

There have been various studies on transportation network design based on mathematical programs with equilibrium constraints (MPEC), a nonconvex optimization problem with bilinear complementarity constraints; see \cite{yang1998models,migdalas1995bilevel,farahani2013review} and references therein. However, the existing results have the following limitations when applied to veriport selection. First, to our best knowledge, these results are all based on traffic equilibrium model that only include link parameters, such as the number of lanes and speed limit in a ground transportation networks \cite{patriksson2015traffic,larsson1999side,nesterov2000stable,nesterov2003stationary,chudak2007static}, but not node parameters, such as the number of touchdown-and-lift-off pads and the total number of scheduled flights at a vertiport in an air transportation network. Consequently, despite the success of link parameter design, such as the addition or expansion of candidate road segments, the design of node parameters, such as the location and capacity of the vertiports, has not been investigated in depth. Second, the number of bilinear complementarity constraints in the MPEC for transportation network design depends on the number of nodes and links in the network, which grows rapidly as the network size increases \cite{migdalas1995bilevel}.

We introduce a mathematical model for selecting the location and capacity of the vertiports in a hybrid air-ground transportation network. Our objective is to reduce the congestion in the network. Our contributions are threefold.
\begin{enumerate}
    \item First, we developed a linear-program-based model for the static traffic equilibria in a hybrid air-ground transportation network. This model extends the Nesterov \& de Palma model by adding node capacity constraints. 
    \item Second, we proposed a mathematical program with bilinear equilibrium constraints that optimizes the location and capacity of the vertiports subject to budget and logical constraints. In addition, we showed how to compute a global optimal solution of this mathematical program by solving a mixed-integer linear program (MILP). This MILP does not contain any bilinear complementarity constraints, and the number of integer variables only depends on the number of candidate vertiports, which is typically much smaller than the total number of nodes or links of the network.
    \item Finally, we demonstrated our results using the Anaheim transportation network,  which contains more than 400 nodes, 900 links, and 9 candidate vertiports where each vertiport has two candidate capacity values. Our MILP contains only 18 binary integer variables.
\end{enumerate}
Our work is the first step to adapt the mathematical tools for ground transportation network analysis and design in the age of UAM. In particular, we showed how to extend the ground traffic equilibria model to predict traffic equilibria in hybrid air-ground transportation networks. We also provide a numerical tool to design the vertiports as an extension of an existing ground transportation network.

The rest of the paper is organized as follows. Section~\ref{sec: related work} briefly reviews some existing results in static traffic equilibria and transportation network design. Section~\ref{sec: nesterov} introduces an extended Nesterov \& de Palma model for static traffic equilbria in hybrid air-ground transportation networks. Section~\ref{sec: mixed integer} introduces the mathematical program with equilibrium constraints for vertiports selection. We demonstrate this mathematical program using the Anaheim transportation network in Section~\ref{sec: experiment} before concluding in Section~\ref{sec: conclusion}.

\section{Related work}
\label{sec: related work}

Transportation network design is the problem of determining the optimal modification of an existing ground transportation network \cite{yang1998models,migdalas1995bilevel}. These modifications can be expanding the capacity of existing links, or adding new links to the network. The quality of the modifications is evaluated via the congestion of the travelers in the modified network and the cost of the modifications. The input of the problem includes 1) the existing transportation network topology, 2) the travel demand between each origin-destination pair for a specific time interval, 3) the characteristics of roads, such as flow capacity and free travel time, 4) the set of candidate options for modifications and their cost, and 5) the total budget for modifications. The outcome of the problem is a set of modifications that satisfies the budget constraint and minimizes the congestion of the travelers. See \cite{farahani2013review} for a recent survey on transportation network design.

A key step in transportation network design is to predict the collective behavior of selfish travelers in congested transportation networks \cite{patriksson2015traffic}. There are two different prediction model used in the literature: a nonlinear-convex-optimization-based model, known as the Beckmann model \cite{beckmann1956studies}, and a linear-program-based model, known as the Nesterov \& de Palma model \cite{nesterov2000stable,nesterov2003stationary}. When combined with the Bureau of Public Roads function for link delays, the Beckmann model provides prediction results similar to the Nesterov \& de Palma model, in terms of the user distribution, the price of anarchy, and the Braess paradox phenomenon. We refer interested readers to \cite{chudak2007static} for a detailed numerical comparison of the two models.

In transportation network design problems, the Nesterov \& de Palma model is computationally more efficient than the Beckmann model. The reason is that a network design problem requires not only the prediction of traffic patterns, but also optimizing the predicted traffic patterns by designing the network parameters. The former only requires solving a convex optimization problem; the latter, however, is a nonconvex optimization problem whose constraints include the Karush–Kuhn–Tucker (KKT) conditions of a convex optimization problem \cite{migdalas1995bilevel}. The KKT conditions of the convex optimization in the Beckmann model contain nonlinear equality constraints \cite{migdalas1995bilevel}, whereas KKT conditions of the optimization in the Nesterov \& de Palma model contain linear constraints only \cite{wei2017expansion}. As a result, using the Nesterov \& de Palma model, the network design optimization is equivalent to a mixed integer linear program \cite{wei2017expansion,wang2018coordinated,xia2021distributed}. In contrast, using the Beckmann's model, solving a mixed integer linear program--or equivalently, a linear-linear bilevel optimization problem--only provides a local descent direction, not a global optimal solution, for the network design optimization \cite{patriksson2002mathematical}.

\section{Static traffic equilibria in hybrid air-ground transportation networks}
\label{sec: nesterov}

We first introduce the static traffic equilibria model of a hybrid air-ground transportation network. This model predicts the static regime of the traffic patterns, where the number of travelers entering and exiting a road segment (or a flight leg) per unit time are the same. We will later use this model to evaluate the performance of a given transportation network. Our model is based on the following three assumptions.

\begin{enumerate}
    \item For each origin-destination pair, only the routes with the minimum accumulated travel time are used.
    \item The traffic flow on each road segment or flight leg never exceeds its capacity; the total incoming and outgoing air traffic flow at each vertiport never exceeds its capacity.
    \item If the ground traffic flow on a road segment is below its capacity, its travel time equals to a nominal value; if the capacity is reached, the travel time is higher than the nominal value. If the total incoming and outgoing air traffic flow at a vertiport is below its capacity, the delay (for embarkation and disembarkation) at this vertiport is zero; if the capacity is reached, the delay is nonnegative.
\end{enumerate}
The above three assumptions have the following implications. The first assumption characterize the selfish and competitive nature of the travelers' behavior; it is also known as the \emph{Wardrop equilibrium principle} \cite{wardrop1952road} and has been the basis of static traffic equilibria models \cite{patriksson2015traffic}. The second assumption states that the traffic flow on a road segment is upper bounded--typically due to the number of lanes and the green light time--and the total air traffic flow at a vertiport is upper bounded--typically due to the number of touchdown and lift-off pads. The third assumption is based on the empirical observation that the travel time on a road segment (or the delay at a vertiport) is at its minimum when there is no congestion, and increase with the congestion level. Similar assumptions were first introduced in the Nesterov \& de Palma model for ground traffic equilibria \cite{nesterov2003stationary}. Here we add two additional assumptions on the capacity and delay at vertiports.

\begin{remark}
Notice that the first assumption above is not reasonable at all when understood literally: other than the time consumed during travel, operating cost, such as the fare of a trip, is also an important factor that affects the travelers' decision. However, one can convert such an operating cost to an additional \emph{effective time} using the travelers' average value of time based on their annual income; a similar conversion was used in \cite{roy2021future}. Therefore, without loss of generality, we refer to the term ``travel time" as an \emph{effective travel time} that accounts for both the operating costs and the actual time of travel.
\end{remark}

In the following, we will introduce a mathematical model for static traffic equilibria that satisfy all the aforementioned assumptions. Our model is based on the Nesterov \& de Palma model for ground traffic equilibria.

\subsection{Hybrid air-ground transportation networks}
We first introduce some basic network concepts: nodes, links, incidence matrices, link and node capacity, and travel time. 

\subsubsection{Nodes and links}
We let \(\mathcal{N}=\{1, 2, \ldots, n_n\}\) denote the set of nodes. We let \(\mathcal{V}=\{v(1), v(2), \ldots, v(n_v)\}\) denote the set of nodes that contain a candidate vertiport location, where \(v(i)\in\mathcal{N}\) for all \(i=1, 2, \ldots, n_v\).

We let \(\mathcal{L}=\{1, 2, \ldots, n_l\}\) denote the set of links. Each link is an ordered pair of distinct nodes, where the first and second nodes are the ``tail" and ``head" of the link, respectively. In addition, we let \(n_g\leq n_l\) denotes the number of ground links, and \(n_a\coloneqq n_l-n_g\) denote the number of air links. The presence of link \(k=(i, j)\) with \(1\leq k\leq n_g\) means that any ground travelers can travel from node \(i\) to node \(j\), and the presence of link \(k=(i, j)\) with \(n_g+1\leq k\leq n_l\) means any aircraft can fly from node \(i\) to node \(j\). 

\subsubsection{Incidence matrices}
We represent the topology of the hybrid air-ground network using the node-edge incidence matrix \(E\in\mathbb{R}^{n_n\times n_l}\). The entry \([E]_{ik}\) in matrix \(E\) is associated with node \(i\) and link \(k\) as follows:
\begin{equation}
     [E]_{ik}=\begin{cases}
    1, & \text{if node \(i\) is the tail of link \(k\),}\\
    -1, & \text{if node \(i\) is the head of link \(k\),}\\
    0, & \text{otherwise.}
    \end{cases}
\end{equation}
Note that \([E]_{ik}\neq 0\) for some \(n_g+1\leq k\leq n_l\) only if \(i\in\mathcal{V}\). 

We represent the topology of the air links and vertiports using the following \emph{unsigned incidence matrix} \(D\in\mathbb{R}^{n_v\times n_l}\) for air links. The entry \([D]_{ik}\) is associated with node \(i\) and link \(k\) as follows 
\begin{equation}
     [D]_{ik}=\begin{cases}
    1, & \text{if \(k\geq n_g+1\) and \([E]_{v(i),k}\neq 0\),}\\
    0, & \text{otherwise.}
    \end{cases}
\end{equation}

\subsubsection{Demand matrix}
We distinguish different travelers in the network using their destinations, denoted by a subset of nodes \(\{s(1), s(2), \ldots, s(n_d)\}\subset\mathcal{N}\). We denote the amount of trips per unit time, also known as the \emph{traffic demand}, between different origin and destination nodes using a \emph{demand matrix} \(S\in\mathbb{R}^{n_n\times n_d}\) defined as follows. For any \(i\in\mathcal{N}\) with \(i\neq s(j)\), we let the entry \([S]_{ij}\) in matrix \(S\) denote the traffic demand from node \(i\) to node \(s(j)\), \ie, the amount of travelers leaving node \(i\) heading towards node \(s(j)\) per unit time. If \([S]_{ij}>0\), then \((i, s(j))\) is also known as an \emph{origin-destination pair}. Finally, we let \([S]_{s(j),j}=-\sum_{i, i\neq s(j)}[S]_{ij}\) for all \(j=1, 2, \ldots, n_d\) such that the sum of each column in matrix \(S\) equals zero. Such an assignment is convenient for defining the flow conservation constraints in matrix form, as we will show. 

\subsubsection{Flow matrix}

At a static traffic equilibrium, the amount of travelers entering and exiting the same link are the same. We represent the amount of travelers on different links per unit time using the \emph{flow matrix} \(X\in\mathbb{R}^{n_l\times n_d}\). In particular, the entry \([X]_{kj}\) in matrix \(X\) denotes the amount of travelers exiting link \(k\) while heading towards destination node \(s(j)\) per unit time. 

By construction, the demand matrix \(S\), flow matrix \(X\), and incidence matrices \(E\) together satisfy the following \emph{flow conservation constraint}:
\begin{equation}\label{eqn: flow conserv}
    EX=S, \enskip X\geq 0.
\end{equation}
Notice that the above constraints implicitly imply that the sum of each column in matrix \(S\) equals zero. This observation justifies our definition of the negative entries in matrix \(S\).

\begin{example}
To illustrate the aforementioned network concepts, we consider the example network in Fig.~\ref{fig: network}. In this case, we have \(\mathcal{N}=\{1, 2, 3, 4\}\), \(\mathcal{V}=\{2, 3\}\), \(\mathcal{L}=\{1, 2, 3, 4, 5, 6\}\), and matrices \(E\) and \(D\) are as follows
\begin{equation*}
    E=\begin{bsmallmatrix}
    1 & 1 & 0 & 0 & 0 & 0\\
    -1 & 0 & 1 & 0 & 1 & -1\\
    0 & -1 & 0 & 1 & -1 & 1\\
    0 & 0 & -1 & -1 & 0 & 0
    \end{bsmallmatrix}, \enskip D=\begin{bsmallmatrix}
    0 & 0 & 0 & 0 & 1 & 1\\
    0 & 0 & 0 & 0 & 1 & 1
    \end{bsmallmatrix}.
\end{equation*}
Furthermore, a possible choice of demand matrix \(S\) and flow matrix \(X\) that satisfy the constraints in \eqref{eqn: flow conserv}, are as follows:
\begin{equation*}
   S=\begin{bsmallmatrix}
     5 & -5 & 0 & 0\\
     10 & 0 & 0 & -10
    \end{bsmallmatrix}^\top, \enskip X=\begin{bsmallmatrix}
    5 & 0 & 0 & 0 & 0 & 0\\
    3 & 7 & 7 & 3 & 0 & 4
    \end{bsmallmatrix}^\top.
\end{equation*}
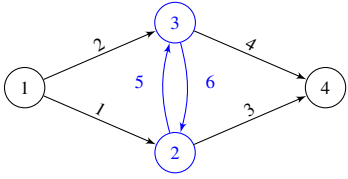
\begin{figure}
    \centering
    \tikzset{
    node/.style={circle,draw,minimum size=0.5em},
    link/.style={->,> = latex'}
}
    \begin{tikzpicture}[scale=0.5]
    
   \node[node] at (0, 0) (1) {\scriptsize 1}; 
   \node[node, blue] at (4, -1.73) (2) {\scriptsize 2}; 
   \node[node, blue] at (4, 1.73) (3) {\scriptsize 3}; 
   \node[node] at (8, 0) (4) {\scriptsize 4}; 
   
   \draw[link] (1) -- node[midway, label={[label distance=-0.5cm,text depth=2ex, rotate=-30]above: {\scriptsize 1}}] {} (2);
   \draw[link] (1) -- node[midway, label={[label distance=-0.5cm,text depth=2ex, rotate=30]above: {\scriptsize 2}}] {} (3);
   \draw[link] (2) -- node[midway, label={[label distance=-0.5cm,text depth=2ex, rotate=30]above: {\scriptsize 3}}] {} (4);
   \draw[link] (3) -- node[midway, label={[label distance=-0.5cm,text depth=2ex, rotate=-30]above: {\scriptsize 4}}] {} (4);
   \draw[link, blue] (2) to[bend left=15]node[midway, label={[label distance=0,text depth=1ex]left: {\scriptsize\color{blue} 5}}] {} (3);
   \draw[link, blue] (3) to[bend left=15]node[midway, label={[label distance=0,text depth=1ex]right: {\scriptsize\color{blue} 6}}] {} (2);

    \end{tikzpicture} 
    \caption{An example of a hybrid air-ground transportation network, where black and blue arcs denote ground and air links, respectively. The blue nodes contain candidate locations for vertiports.}
    \label{fig: network}
\end{figure}
\end{example}

\subsubsection{Capacity and free travel time}
The \emph{link capacity} of a (ground or air) link is the maximum amount of travelers existing this link per unit time. For a ground link, this capacity depends on the number of lanes and cycle time of traffic signals; for an air link, this capacity depends on the available airspace and the maximum allowed aircraft density of each flight leg. We denote the link capacity of all the air and ground links using the \emph{link capacity vector} \(f\in\mathbb{R}_+^{n_l}\), where its entry \([f]_k\) denotes the capacity on link \(k\).

The \emph{free travel time} of a (ground or air) link is the time consumed by each traveler on this link when there is no traffic congestion. We denote the free travel time of all links using a vector \(c\in\mathbb{R}^{n_l}\), whose \(k\)-th entry denotes the free travel time on link \(k\). 

The Nesterov \& de Palma model \cite{nesterov2000stable,nesterov2003stationary} assumes that the link capacity, the free travel time, and the flow matrix are coupled as follows. First, the traffic flow on each link never exceeds its capacity, \ie,
\begin{equation}\label{eqn: link cap}
    \sum_{j=1}^{n_d}[X]_{kj}\leq [f]_k,
\end{equation}
for all \(k\in\mathcal{L}\). Second, if the traffic flow on a link is below its capacity, then the travel time of this link equals the corresponding free travel time; if the traffic flow on a link equals its capacity, then the average travel time of this link is lower bounded by the corresponding free travel time. In other words, if vector \(\tilde{c}\in\mathbb{R}^{n_l}\) is such that \([\tilde{c}]_k\) denotes the travel time on link \(k\), then the following conditions hold for all \(k\in\mathcal{L}\): 
\begin{subequations}\label{eqn: link cs}
\begin{align}
     &\sum_{j=1}^{n_d}[X]_{kj}<[f]_k \Rightarrow [\tilde{c}]_k=[c]_k,\\
     &\sum_{j=1}^{n_d}[X]_{kj}=[f]_k \Rightarrow [\tilde{c}]_k\geq [c]_k.
\end{align}
\end{subequations}

In addition to the above link capacity, here we also consider additional capacity of vertiports in the hybrid air-ground transportation network. Each vertiport can accommodate a maximum amount of take-off and landing per unit time, due to the limited number of touch-down and lift-off pads. Similar to those in Nesterov \& de palma model, we make the following assumptions. First, the total amount of air traffic entering and exiting a vertiport never exceeds its capacity, \ie, 
\begin{equation}\label{eqn: node cap}
    \sum_{k=1}^{n_l}\sum_{j=1}^{n_d}[D]_{ik}[X]_{kj}\leq [g]_i.
\end{equation}
for all \(i=1, 2, \ldots, n_v\).
Second, if the traffic flow on a vertiport is below its capacity, then the delay at this vertiport equals zero; if the traffic flow on a vertiport reaches its capacity, then the average flight delay at this vertiport is nonnegative. In other words, if vector \(\tilde{e}\in\mathbb{R}^{n_v}\) is such that \([\tilde{e}]_i\) denote the average flight delay at vertiport \(i\), then the following condition holds for all \(i=1, 2, \ldots, n_v\): 
\begin{subequations}\label{eqn: node cs}
\begin{align}
     &\sum_{k=1}^{n_l}\sum_{j=1}^{n_d}[D]_{ik}[X]_{kj}<[g]_i \Rightarrow [\tilde{e}]_i=0,\\
     &\sum_{k=1}^{n_l}\sum_{j=1}^{n_d}[D]_{ik}[X]_{kj}=[g]_i \Rightarrow [\tilde{e}]_i\geq 0.
\end{align}
\end{subequations}

In practice, the link and node capacity are typically defined by the number of (ground or air) vehicles, rather than the number of travelers or passengers of the vehicles. Hence, the value of the link and node capacity above often depends on the average number of passengers per ground vehicle and air vehicle. The latter increases, for example, with the capacity of the vehicle and the average level of ridesharing.

\begin{remark}
Several studies in the literature have considered the link capacity constraints \eqref{eqn: link cap} in ground transportation network models \cite{hearn1980bounding}, including the Beckmann model \cite{larsson1999side} and the Nesterov \& de Palma model \cite{nesterov2000stable,nesterov2003stationary}. We refer interested readers to \cite{chudak2007static} for a detailed numerical comparison of the effects of these constraints in different transportation models. 
\end{remark}

\subsection{Traffic equilibria with node and link capacities}

We are now ready to introduce the concept of \emph{static equilibrium matrix}.

\begin{definition}\label{def: Nesterov}
Matrix \(X\in\mathbb{R}^{n_l\times n_v}\) is a static equilibrium matrix defined by the tuple \(\{S, E, D, c, f, g\}\) if it is the optimizer of the linear program in \eqref{opt: flow}.

\vspace{1em}
\noindent\fbox{%
\centering
\parbox{0.96\linewidth}{
{\bf{Linear program for static traffic equilibria}}
\begin{equation}\label{opt: flow}
    \begin{aligned}
    \underset{X}{\mbox{minimize}}\enskip & c^\top X\mathbf{1}_d \\
    \mbox{subject to}\enskip & EX=S, \enskip X\geq 0,\\
    & X\mathbf{1}_d\leq f, \enskip DX\mathbf{1}_d\leq g.
    \end{aligned}
\end{equation}
}%
}

\end{definition}

\begin{remark}
Optimization~\eqref{opt: flow} augments the multicommodity min-cost flow problem \cite[Chp. 4]{bertsekas1998network} with additional node capacity constraints. The main difference between optimization~\eqref{opt: flow} and previous work on the Nesterov \& de Palma model for ground traffic \cite{nesterov2000stable,nesterov2003stationary} is that optimization~\eqref{opt: flow} contains the vertiport capacity constraints in \eqref{eqn: node cap}, which, unlike the link capacity well-studied in the literature, are defined on the nodes of the network rather than the links.
\end{remark}

The linear program in Definition~\ref{def: Nesterov} is our prediction model for the traffic patterns--including flow and travel cost--of a hybrid air-ground transportation network. The following proposition provides two equivalent characterization of static equilibrium matrix based on the optimality condition of linear programs.

\begin{proposition}\label{prop: optimality}
Matrix \(X\in\mathbb{R}^{n_l\times n_v}\) is an static equilibrium matrix associated with the tuple \(\{S, E, D, c, f, g\}\) if and only if there exists \(V\in\mathbb{R}^{n_n\times n_d}\), \(U\in\mathbb{R}^{n_l\times n_d}\), \(p\in\mathbb{R}^{n_l}\), and \(q\in\mathbb{R}^{n_v}\) such that the following two conditions hold simultaneously.
\begin{enumerate}
    \item The following constraints are satisfied:
    \begin{subequations}\label{eqn: primal-dual feasible}
\begin{align}
    &EX=S,\enskip X\mathbf{1}_d\leq f, \enskip DX\mathbf{1}_d\leq g,\label{eqn: current}\\
    &(c+p+D^\top q)\mathbf{1}_d^\top=E^\top V+U,\label{eqn: voltage}\\
    &X\geq 0, \enskip U\geq 0, \enskip p\geq 0, \enskip q\geq 0.\label{eqn: nonneg}
\end{align}
\end{subequations}
    \item One of the following two set of constraints are satisfied: either 
    \begin{equation}\label{eqn: complementary}
    \begin{aligned}
    &\tr(X^\top U)=0, \enskip p^\top X\mathbf{1}_d= f^\top p,\\
    &q^\top DX\mathbf{1}_d= g^\top q,
    \end{aligned}
    \end{equation}
    or 
    \begin{equation}\label{eqn: duality gap}
        c^\top X\mathbf{1}_d+f^\top p+g^\top q=\tr(V^\top S).
    \end{equation}
\end{enumerate}
\end{proposition}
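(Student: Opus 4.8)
The plan is to recognize that \eqref{opt: flow} is a linear program in the matrix variable $X$, and that the two claimed characterizations are precisely (i) its Karush--Kuhn--Tucker system and (ii) its zero-duality-gap certificate. First I would attach a multiplier to each constraint: a free matrix $V\in\mathbb{R}^{n_n\times n_d}$ to the equality $EX=S$, a nonnegative matrix $U$ to $X\geq 0$, a nonnegative vector $p$ to the link bound $X\mathbf{1}_d\leq f$, and a nonnegative vector $q$ to the node bound $DX\mathbf{1}_d\leq g$. Expressing the objective and coupling terms through the Frobenius inner product, e.g. $c^\top X\mathbf{1}_d=\langle c\mathbf{1}_d^\top,X\rangle$, $p^\top X\mathbf{1}_d=\langle p\mathbf{1}_d^\top,X\rangle$, and $q^\top DX\mathbf{1}_d=\langle D^\top q\mathbf{1}_d^\top,X\rangle$, makes the Lagrangian linear in $X$, so its gradient is immediate.

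Second, I would set the gradient of the Lagrangian to zero. The resulting stationarity condition is $c\mathbf{1}_d^\top+p\mathbf{1}_d^\top+D^\top q\mathbf{1}_d^\top=E^\top V+U$, which is exactly \eqref{eqn: voltage}; together with primal feasibility \eqref{eqn: current} and the sign restrictions $X,U,p,q\geq 0$ in \eqref{eqn: nonneg}, this is precisely condition~1. The complementary-slackness relations—$\langle U,X\rangle=\tr(X^\top U)=0$ for $X\geq 0$, $p^\top(X\mathbf{1}_d-f)=0$ for the link bound, and $q^\top(DX\mathbf{1}_d-g)=0$ for the node bound—are exactly \eqref{eqn: complementary}. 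Since \eqref{opt: flow} is a linear program, its KKT system is both necessary and sufficient for optimality with no constraint qualification needed (for necessity the linear constraints suffice, and sufficiency follows from convexity). This establishes that $X$ is a static equilibrium matrix if and only if condition~1 and \eqref{eqn: complementary} hold.

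Finally, to bring in the alternative \eqref{eqn: duality gap}, I would derive a single algebraic identity rather than invoke a duality theorem. Substituting $c\mathbf{1}_d^\top=E^\top V+U-p\mathbf{1}_d^\top-D^\top q\mathbf{1}_d^\top$ from \eqref{eqn: voltage} into $c^\top X\mathbf{1}_d=\langle c\mathbf{1}_d^\top,X\rangle$ and using $EX=S$ yields
\begin{equation*}
c^\top X\mathbf{1}_d+f^\top p+g^\top q-\tr(V^\top S)=\tr(X^\top U)+p^\top(f-X\mathbf{1}_d)+q^\top(g-DX\mathbf{1}_d).
\end{equation*}
Under condition~1 each of the three terms on the right is nonnegative (entrywise products of nonnegative quantities), so the right side, hence the left, vanishes if and only if all three terms vanish, i.e. if and only if \eqref{eqn: complementary} holds; and the left side vanishing is exactly \eqref{eqn: duality gap}. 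Thus, given condition~1, \eqref{eqn: duality gap} and \eqref{eqn: complementary} are equivalent, which closes the proof. I expect no conceptual obstacle here, as LP duality is standard; the only real care is bookkeeping—matching matrix/vector shapes in \eqref{eqn: voltage} and verifying that the duality-gap identity splits exactly into the three slackness terms.
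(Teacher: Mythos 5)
Your proposal is correct and takes essentially the same route as the paper: both form the Lagrangian of \eqref{opt: flow}, exploit linearity in \(X\) to obtain the stationarity condition \eqref{eqn: voltage}, and characterize optimality by primal--dual feasibility together with complementary slackness. The only (harmless) difference is that the paper derives the explicit dual program and cites \cite[Thm.~1.3.3]{ben2001lectures} for both the optimality characterization and the equivalence of \eqref{eqn: complementary} with \eqref{eqn: duality gap}, whereas you establish that equivalence by hand through the identity
\begin{equation*}
c^\top X\mathbf{1}_d+f^\top p+g^\top q-\tr(V^\top S)=\tr(X^\top U)+p^\top(f-X\mathbf{1}_d)+q^\top(g-DX\mathbf{1}_d),
\end{equation*}
which I have verified is algebraically correct and splits into the three nonnegative slackness terms exactly as you claim.
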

\begin{proof}
See Appendix~\ref{app: prop 1}
\end{proof}

The conditions in \eqref{eqn: complementary} and \eqref{eqn: duality gap} are also known as the \emph{complementary slackness condition} and the \emph{zero-duality-gap condition}. For linear programs, these two conditions are equivalent \cite[Thm. 1.3.3]{ben2001lectures}. Later we will use both conditions to define and simplify the mathematical program with equilibrium constraints for vertiport selection.  

Let \(\tilde{c}=c+p\) and \(\tilde{e}=q\). One can verify that the conditions in \eqref{eqn: current}, \eqref{eqn: nonneg}, and \eqref{eqn: complementary} together imply the constraints in \eqref{eqn: flow conserv}, \eqref{eqn: link cap}, \eqref{eqn: node cap}, \eqref{eqn: link cs}, and \eqref{eqn: node cs}. Hence the equilibria model in Definition~\ref{def: Nesterov} satisfies the second and the third assumptions we introduced at the beginning of this section. 

Furthermore, the conditions in Proposition~\ref{prop: optimality} also implies that only routes with the minimum accumulated travel time are used, a property known as the Wardrop equilibrium principle \cite{wardrop1952road}. To see this implication, we define the set of \emph{route vectors} from node \(i\) to destination node \(s(j)\) as follows:
\begin{equation}
    \mathcal{P}(i, s(j)) = \left\{u\in\{0, 1\}^{n_l}\left|\begin{aligned} &[Eu]_i=1, [Eu]_{s(j)}=-1,\\
    &[Eu]_k=0, \forall k\neq i, s(j).\end{aligned}\right.\right\}.
\end{equation}
Intuitively, each vector \(u\) in set \(\mathcal{P}(i, s(j))\) defines a sequence of links connecting node \(i\) and node \(s(j)\) in a head-to-tail fashion; link \(k\) is on the route defined by \(u\) if and only if \([u]_k=1\). Note that the set \(\mathcal{P}(i, s(j))\) is not necessarily a singleton, since there can be multiple routes--routes composed of ground links, air links, or a combination of both--between each origin-destination pairs.

Based on the above definition, the following corollary shows that any tuple \(\{X, U, V, p, q\}\) satisfying the conditions in Proposition~\ref{prop: optimality} implies that any used routes has the lowest accumulated travel time, where the travel time of link \(k\) is given by \([c+p+Dq]_k\).

\begin{corollary}
\label{cor: Wardrop}
Let \(\{X, U, V, p, q\}\) satisfy the conditions in \eqref{eqn: primal-dual feasible} and \eqref{eqn: complementary}, and \(\overline{c}\coloneqq c+p+Dq\). Let \(i\in\{1, 2, \ldots, n_n\}\) and \(j\in \{1, 2, \ldots, n_d\}\) such that \(i\neq s(j)\) and \([S]_{i, s(j)}>0\). If \(u^\star\in\mathcal{P}(i, s(j))\) and \([X]_{kj}>0\) for all \(k\) such that \([u^\star]_k=1\), then the following condition holds for all \(u\in\mathcal{P}(i, s(j))\):
\begin{equation}
    \overline{c}^\top u^\star \leq \overline{c}^\top u.
\end{equation}
\end{corollary}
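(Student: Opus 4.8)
The plan is to turn the dual-feasibility (``voltage'') identity \eqref{eqn: voltage} into a statement about node potentials, and then use complementary slackness to show that the accumulated cost $\overline{c}^\top u$ of a route splits into a route-independent potential difference plus a nonnegative slack that vanishes exactly on used routes. Throughout I would read \eqref{eqn: voltage}, which in the notation of the corollary reads $\overline{c}\,\mathbf{1}_d^\top=E^\top V+U$, one destination at a time: fixing the column index $j$, the identity gives, for every link $k$,
\begin{equation*}
[\overline{c}]_k=\sum_{i'=1}^{n_n}[E]_{i'k}[V]_{i'j}+[U]_{kj},
\end{equation*}
where $[U]_{kj}\ge 0$ by \eqref{eqn: nonneg}.

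First I would compute $\overline{c}^\top u$ for an arbitrary route $u\in\mathcal{P}(i,s(j))$. Substituting the per-link identity and interchanging the order of summation, the $V$-term contributes $\sum_{i'}[V]_{i'j}\sum_k[E]_{i'k}[u]_k=\sum_{i'}[V]_{i'j}[Eu]_{i'}$. By the defining constraints of $\mathcal{P}(i,s(j))$ --- namely $[Eu]_i=1$, $[Eu]_{s(j)}=-1$, and $[Eu]_{k}=0$ otherwise --- this telescopes to the endpoint potential difference $[V]_{ij}-[V]_{s(j),j}$, a quantity that does \emph{not} depend on the particular route $u$. The leftover term $\sum_k[U]_{kj}[u]_k$ is nonnegative because $U\ge 0$ and $u\ge 0$. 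Hence every route obeys the uniform lower bound $\overline{c}^\top u\ge[V]_{ij}-[V]_{s(j),j}$.

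Next I would show the used route $u^\star$ attains this bound. The complementary-slackness condition $\tr(X^\top U)=0$ in \eqref{eqn: complementary}, together with $X\ge 0$ and $U\ge 0$ from \eqref{eqn: nonneg}, forces the entrywise products to vanish, $[X]_{kj}[U]_{kj}=0$ for all $k$ and $j$. Since $u^\star$ is used, $[X]_{kj}>0$ whenever $[u^\star]_k=1$, so $[U]_{kj}=0$ on the support of $u^\star$ and therefore $\sum_k[U]_{kj}[u^\star]_k=0$. Thus $\overline{c}^\top u^\star=[V]_{ij}-[V]_{s(j),j}$, and comparing with the lower bound of the previous step gives $\overline{c}^\top u^\star\le\overline{c}^\top u$ for every $u\in\mathcal{P}(i,s(j))$.

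This is essentially a clean LP-duality/KKT manipulation, so I do not expect a deep obstacle; the points that demand care are bookkeeping. I must respect the sign convention of $E$ (tail $+1$, head $-1$) so that $\sum_{i'}[V]_{i'j}[Eu]_{i'}$ collapses to $[V]_{ij}-[V]_{s(j),j}$ rather than its negative, and I must keep the destination index $j$ fixed throughout so that the correct columns of $V$ and $U$ enter. The one conceptual point worth flagging is that the lower bound $[V]_{ij}-[V]_{s(j),j}$ is common to \emph{all} routes between the same origin-destination pair; this route-independence is precisely what promotes ``$u^\star$ attains the bound'' to the Wardrop statement that every used route is cost-minimizing.
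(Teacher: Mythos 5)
Your proof is correct and takes essentially the same route as the paper's: both multiply the dual-feasibility identity \eqref{eqn: voltage} (column $j$) by a route vector so that the $V$-term telescopes to the route-independent potential difference $[V]_{ij}-[V]_{s(j),j}$, and both use $\tr(X^\top U)=0$ together with $X\geq 0$, $U\geq 0$ to force the $U$-slack to vanish on the support of the used route $u^\star$. Your write-up is in fact more careful with the bookkeeping than the paper's own (which contains index typos such as $[u^\star]_j$ where $[u^\star]_k$ is meant), and you correctly read $\overline{c}$ as $c+p+D^\top q$, consistent with \eqref{eqn: voltage}.
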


Corollary~\ref{cor: Wardrop} shows that the equilibria model in Definition~\ref{def: Nesterov} also satisfies the first assumption we introduced at the beginning of this section: any routes with positive traffic flow has the lowest accumulated time of travel.

Alternatively, one can predict the traffic equilibria using an extension of the Beckmann model, rather than an extension of the Nesterov \& de Palma model \cite{larsson1999side}. However, the Beckmann model result in a set equilibrium conditions with more nonlinear equality constraints than those in Nesterov \& de Palma model \cite{yang1998models,migdalas1995bilevel}: the equilibrium conditions in the Beckmann model are the KKT conditions of a nonlinear convex optimization, which contain nonlinear constraints; in contrast, the equilibrium conditions in the Nesterov \& de Palma model, as we showed in Proposition~\ref{prop: optimality}, only contain linear constraints. On the other hand, studies have shown that the Nesterov \& de Palma model and the Beckmann model give similar prediction results \cite{chudak2007static}. Therefore here we chose the Nesterov \& de Palma model as the basis of our equilibria model.

\section{Vertiport selection via mixed-integer programs}
\label{sec: mixed integer}

We now introduce a mathematical model that selects the location and capacity of vertiports in a hybrid air-ground transportation network as an effort to optimize the resulting traffic equilibria. In particular, we aim to change the optimal solution of linear program \eqref{opt: flow} by choosing the entries in the vertiport-capacity vector \(g\) among discrete values--including zero values, in which case the corresponding vertiport location is discarded.

Throughout we make the following assumptions on linear program~\eqref{opt: flow}.
\begin{assumption}\label{asp: feasible}
Linear program \eqref{opt: flow} is feasible and has a bounded optimal value.
\end{assumption}

Assumption~\ref{asp: feasible} implies that link capacity and vertiport capacity in the hybrid air-ground transportation network are large enough to accommodate the traffic demand, \ie, the flow conservation constraints in \eqref{eqn: flow conserv} and capacity constraints in \eqref{eqn: link cap} and \eqref{eqn: node cap} hold simultaneously. Such an assumption trivially holds in practice, since the ground transportation network alone can accommodate the traffic demand, even without adding any vertiports and air transportation networks.

Based on the above assumption, we will first define the objective function for vertiport selection problem in Section~\ref{subsec: asp & obj}, then define a mathematical program with equilibrium constraints (MPEC) for vertiport selection. We further prove that this MPEC is equivalent to a mixed integer linear program (MILP) in Section~\ref{subsec: MILP}.

\subsection{Vertiport selection via MPEC}
\label{subsec: asp & obj}

We now introduce the mathematical problem for vertiport location and capacity selection. To this end, we first introduce three components of the vertiport selection problem: the design variables, the objective function, and the constraints.  

\subsubsection{The design variables} 
First, we introduce the design variable of the vertiport selection problem. To this end, we start with the following assumption on the vertiport capacity vector \(g\).

\begin{assumption}\label{asp: cap option}
There exists \(G\in\mathbb{R}_+^{n_v\times n_c}\) such that the vertiport capacity vector \(g\) in Definition~\ref{def: Nesterov} satisfies the following constraints:
\begin{equation*}
    [g]_i\in\{0, [G]_{i1}, [G]_{i 2}, \ldots, [G]_{i, n_c}\},
\end{equation*}
where \([G]_{i1}<[G]_{i 2}<\cdots <[G]_{i, n_c}\) for all \(i=1, 2, \ldots, n_v\).
\end{assumption}

Assumption~\ref{asp: cap option} states that the capacity of the \(i\)-th vertiport is selected from an increasing sequence \(\{0, [G]_{i1}, [G]_{i 2},\ldots, [G]_{i, n_c}\}\). For example, if \(n_c=3\), then the capacity of the \(i\)-th vertiport can be zero--in this case, this vertiport is discarded--or a small, medium, or large value, denoted by \([G]_{i1}\), \([G]_{i 2}\), and \([G]_{i3}\), respectively. 

Here we assume the capacity of each vertiport can only be discrete values rather than continuous ones, for the following reasons. First, the capacity of a candidate vertiport--which is the upper bound of the total incoming and outgoing air traffic per unit time--is zero if this candidate is not selected, and strictly positive otherwise. A discrete value of the capacity can capture such as discrete change when a candidate vertiport changes from being not selected to selected. Second, the value of vertiport capacity often can only change discontinuously in practice. For example, increasing the capacity of a vertiport requires increasing the number of touch-down-and-liftoff pads, which can only change discretely rather than continuously. 

Based on Assumption~\ref{asp: cap option}, we define the \emph{selection matrix} as follows. Let \(B\in\mathbb{R}^{n_v\times n_c}\) be a binary matrix such that \([g]_i=[G]_{ij}\) if and only if \([B]_{ij}=1\). Then Assumption~\ref{asp: cap option} holds if and only if
\begin{equation}\label{eqn: B matrix}
     g=(B\odot G)\mathbf{1}_c, \enskip B\mathbf{1}_c\leq \mathbf{1}_v,  \enskip B\in\{0, 1\}^{n_v\times n_c}.
\end{equation}
In other words, each choice of \(B\) that satisfies the constraints in \eqref{eqn: B matrix} corresponds to a value of capacity vector \(g\) that satisfies Assumption~\ref{asp: cap option}. In the following, we will use binary matrix \(B\) as our design variable in the vertiport selection problem.

\subsubsection{The objective function}

Given a set of vertiport with corresponding capacity, we will introduce a quantitative measure for the quality of the traffic equilibria. To this end, given a selected capacity vector \(g\), let \(\{X, U, V, p, q\}\) be a tuple that satisfies the the equilibrium conditions in Proposition~\ref{prop: optimality}. We evaluate the quality of this tuple using the following \emph{network loading function}:
\begin{equation}\label{eqn: loading}
\begin{aligned}
    &\ell(X, p, q)\coloneqq (c+p+Dq)^\top X\mathbf{1}_d\\
    &=\sum_{k=1}^{n_l}\underbrace{[c+p+Dq]_k}_{\overline{c}_k}\underbrace{[X\mathbf{1}_d]_k}_{\overline{x}_k}.
\end{aligned}
\end{equation}
Here the value of \(\overline{c}_k\) is the travel time on link \(k\) at the equilibrium: it is the sum of the free travel time \([c]_k\) and the extra time delay caused by the congestion on the link and nodes, given by \([p+Dq]_k\). The value of \(\overline{x}_k\) is the total amount of travelers entering or exiting link \(k\) per unit time\footnote{At a static equilibrium, the amount of travelers entering and exiting the same link are the same; see \cite{nesterov2003stationary}.}.  

Assumption~\ref{asp: cap option} states that the location and capacity of the vertiports depend on a binary selection matrix \(B\): if \(\sum_{j=1}^{n_c}[B]_{ij}=0\), then candidate vertiport \(i\) is not selected; if \([B]_{ij}=1\), then vertiport \(i\) is selected with capacity \([G]_{ij}\) at the cost of \([K]_{ij}\). In addition, the capacity selection for all the vertiports is subject to a budget constraints defined by parameter \(\gamma\).

\subsubsection{The constraints}
The first set of constraints in our selection problem are given in \eqref{eqn: primal-dual feasible}, \eqref{eqn: complementary} (or \eqref{eqn: duality gap}), and \eqref{eqn: B matrix}.
Together these constraints define the coupling relation among the selection matrix \(B\), the capacity vector \(g\), and the static traffic equilibria that correspond to the tuple \(\{X, U, V, p, q\}\).

In addition, we also consider the following budget and logical constraints on the selection matrix \(B\). First, constructing and maintaining a vertiport comes at a cost--which typically increases with the vertiport capacity. To impose a budget constraints in the vertiport selection problem, we introduce a \emph{cost matrix} \(K\in\mathbb{R}^{n_v\times n_c}\), where its entry \([K]_{ij}\) is the cost of selecting capacity \([G]_{ij}\) for the \(i\)-th vertiport. We let \(\gamma\in\mathbb{R}\) denote the upper bound on the total cost of vertiport selection, then a budget constraint takes the following form:
\begin{equation}\label{eqn: budget}
    \mathbf{1}_v^\top (K\odot B)\mathbf{1}_c\leq \gamma.
\end{equation}

Second, the choice of vertiport location are often subject to additional logical constraints: for example, two locations close to each other cannot be selected simultaneously due to noise management regulations, some locations must be selected as an air traffic hub. To account for these logical constraints, we consider the following linear constraints on the selection matrix \(B\)
\begin{equation}\label{eqn: logical}
    A\vect(B)\leq b,
\end{equation}
where \(\vect:\mathbb{R}^{n_v\times n_c}\to\mathbb{R}^{n_vn_c}\) is a vectorization map such that \([\vect(B)]_{(i-1)n_v+j}=B_{ij}\)  for all \(i=1, 2, \ldots, n_v\) and \(j=1, 2, \ldots, n_c\), \(A\in\mathbb{R}^{n_b\times (n_vn_c)}\) and \(h\in\mathbb{R}^{n_b}\) defines all the logical constraints on matrix \(B\). 

\begin{example}
To illustrate the logical constraints on vertiport location, we consider the case with two candidate vertiport locations, and each vertiport has two candidate capacity value, \ie, \(n_v=n_c=2\). In this case, If we let
\begin{equation}
    A=\begin{bsmallmatrix}
    1 & 1 & 1 & 1\\
    -1 & -1 & -1 & -1
    \end{bsmallmatrix}, \enskip b=\begin{bsmallmatrix}
    1\\
    -1
    \end{bsmallmatrix},
\end{equation}
then the constraint in \eqref{eqn: logical} implies that one and only one of the two candidate vertiport location can be selected, \ie,
\begin{equation}
    [B]_{11}+[B]_{12}+[B]_{21}+[B]_{22}=1.
\end{equation}
\end{example}

\subsubsection{The MPEC for vertiport selection}
\label{subsec: MPEC}

We now introduce a mathematical program that selects the value of capacity vector \(g\). The idea is to optimally choose the value of vector \(g\) such that the resulting equilibrium minimizes a weighted sum of the network loading function in \eqref{eqn: loading} and the selection cost defined in the left hand side of \eqref{eqn: budget}. To this end, we consider the following optimization problem, where \(\omega\in\mathbb{R}_+\) is a weighting parameter. 

\vspace{1em}
\noindent\fbox{%
\centering
\parbox{0.96\linewidth}{
{\bf{Vertiport selection via MPEC}}
\begin{equation}\label{opt: MPEC}
    \begin{aligned}
        \underset{\substack{g, B, p, q,\\
        X, U, V
        }}{\mbox{minimize}}\enskip & (c+p+Dq)^\top X\mathbf{1}_d+\omega\mathbf{1}_v^\top (K\odot B)\mathbf{1}_c \\
        \enskip \mbox{subject to}\,\enskip
        & EX=S,\enskip X\mathbf{1}_d\leq f,\enskip DX\mathbf{1}_d\leq g,\\
        & (c+p+Dq)\mathbf{1}_d^\top=E^\top V+U\\
        & X\geq 0, \enskip U\geq 0, \enskip p\geq 0, \enskip q\geq 0,\\
        & c^\top X\mathbf{1}_d+f^\top p+g^\top q=\tr(V^\top S),\\
        & g=(G\odot B) \mathbf{1}_c, \enskip B\mathbf{1}_c\leq \mathbf{1}_v,\\
    & \mathbf{1}_v^\top (K\odot B)\mathbf{1}_c\leq \gamma, \enskip A\vect(B)\leq b,\\  & B\in\{0, 1\}^{n_v\times n_c}.
    \end{aligned}
\end{equation}
}%
}
\vspace{1em}

Optimization~\eqref{opt: MPEC} is a \emph{mathematical program with equilibrium constraints} (MPEC): it includes the equilibrium conditions in \eqref{eqn: primal-dual feasible} and \eqref{eqn: duality gap} as part of its constraints.  Proposition~\ref{prop: optimality} shows that these constraints--which jointly depend on the primal and dual variables for linear program \eqref{opt: flow}--together ensure that matrix \(X\) is a static equilibrium matrix in the sense of Definition~\ref{def: Nesterov}; similar constraints are common in MPEC, see \cite[Sec. 7.1]{bard2013practical}. According to Proposition~\ref{prop: bilinear}, one can alternatively replace the duality gap constraint in optimization~\eqref{opt: MPEC}--which was first introduced in \eqref{eqn: duality gap}--with the complementarity constraints in \eqref{eqn: complementary}. However, such replacement introduces even more bilinear functions of the unknowns. Hence we choose to write optimization in its current form; a similar MPEC was also used in electrified ground network design \cite{wei2017expansion}.

A global optimal solution of optimization~\eqref{opt: MPEC} is difficult to compute, since its objective function and constraints of optimization~\eqref{opt: MPEC} contains bilinear function of unknowns, such as \(p^\top X\mathbf{1}_d\) and \(g^\top q\).

\subsection{Reformulation of MPEC as an equivalent MILP}
\label{subsec: MILP}

We now show that the MPEC in \eqref{opt: MPEC}, a bilinear mixed integer optimization problem, is equivalent to a mixed integer linear program (MILP). As a result, one can compute a global optimal solution of optimization \eqref{opt: MPEC} using off-the-shelf optimization software, such as GUROBI \cite{gurobi}.  

As our first step, the following proposition shows that,  how to replace the bilinear constraints in optimzation~\ref{opt: MPEC} with a linear one.

\begin{proposition}\label{prop: bilinear}
Let \(G\in\mathbb{R}_{++}^{n_v\times n_c}\). There exists a large enough \(\mu\in\mathbb{R}_{++}\) such that the following two set of conditions are equivalent.
\begin{enumerate}
    \item There exists \(\delta\in\mathbb{R}\), \(q\in\mathbb{R}^{n_v}\), \(B\in\{0, 1\}^{n_v\times n_c}\) and \(g\in\mathbb{R}^{n_v}\) such that
\begin{equation}\label{eqn: bilinear gq}
\begin{aligned}
    & \delta = g^\top q,\enskip g=(G\odot B)\mathbf{1}_c, \enskip B\mathbf{1}_c\leq \mathbf{1}_v, \enskip q\geq 0.
\end{aligned}    
\end{equation}
    \item There exists \(\delta\in\mathbb{R}\), \(q\in\mathbb{R}^{n_v}\), \(B\in\{0, 1\}^{n_v\times n_c}\) and \(Y\in\mathbb{R}^{n_v\times n_c}\), such that
\begin{equation}\label{eqn: linear gq}
\begin{aligned}
    &\delta = \mathbf{1}_v^\top Y\mathbf{1}_c, \enskip 0\leq Y\leq \mu B, \enskip B\mathbf{1}_c\leq \mathbf{1}_v, \\
    &0\leq G\odot (q \mathbf{1}_c^\top)-Y\leq \mu (\mathbf{1}_v\mathbf{1}_c^\top-B),\enskip q\geq 0.
\end{aligned}
\end{equation}
\end{enumerate}
\end{proposition}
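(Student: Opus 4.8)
The plan is to recognize the constraint set in condition~(2) as the standard big-$M$ (McCormick) linearization of the entrywise product $Y = G\odot(q\mathbf{1}_c^\top)\odot B$, and then verify the two implications separately. The bilinear quantity being linearized is $g^\top q$: since $B\mathbf{1}_c\le\mathbf{1}_v$ forces each row of the binary matrix $B$ to have at most one nonzero entry, we have $[g]_i=\sum_{j=1}^{n_c}[G]_{ij}[B]_{ij}$ and hence $g^\top q=\sum_{i,j}[G]_{ij}[q]_i[B]_{ij}$. The auxiliary matrix $Y$ is intended to carry exactly these products, $[Y]_{ij}=[G]_{ij}[q]_i[B]_{ij}$, so that $\mathbf{1}_v^\top Y\mathbf{1}_c=g^\top q$.

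For the implication \eqref{eqn: linear gq}$\Rightarrow$\eqref{eqn: bilinear gq}, I would show that the four inequalities in \eqref{eqn: linear gq} force $[Y]_{ij}=[G]_{ij}[q]_i[B]_{ij}$ for \emph{every} $\mu>0$, via a two-case check on the binary entry $[B]_{ij}$. When $[B]_{ij}=0$, the bound $0\le[Y]_{ij}\le\mu[B]_{ij}$ collapses to $[Y]_{ij}=0$; when $[B]_{ij}=1$, the bound $0\le [G]_{ij}[q]_i-[Y]_{ij}\le\mu(1-[B]_{ij})$ collapses to $[Y]_{ij}=[G]_{ij}[q]_i$. Summing these identities and setting $g=(G\odot B)\mathbf{1}_c$ yields $\delta=\mathbf{1}_v^\top Y\mathbf{1}_c=g^\top q$; together with $B\mathbf{1}_c\le\mathbf{1}_v$ and $q\ge0$, which carry over verbatim, this gives \eqref{eqn: bilinear gq}. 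This direction requires no lower bound on $\mu$.

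For the converse \eqref{eqn: bilinear gq}$\Rightarrow$\eqref{eqn: linear gq}, I would define $Y\coloneqq G\odot(q\mathbf{1}_c^\top)\odot B$ directly. The equality $\mathbf{1}_v^\top Y\mathbf{1}_c=\delta$ and the two lower bounds in \eqref{eqn: linear gq} are immediate from $G\in\mathbb{R}_{++}^{n_v\times n_c}$ and $q\ge0$. The two upper bounds, however, both reduce to the single requirement $[G]_{ij}[q]_i\le\mu$ on the active entries, and this is exactly where the qualifier ``large enough'' enters and where the main obstacle lies: a single constant $\mu$ must dominate $\max_{i,j}[G]_{ij}[q]_i$ over the entire admissible range of $q$. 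I would discharge this by appealing to the problem structure rather than to \eqref{eqn: bilinear gq} in isolation—namely, that in the intended use within optimization~\eqref{opt: MPEC} the multiplier $q$ is confined to a bounded set by the boundedness guaranteed in Assumption~\ref{asp: feasible} (the zero-duality-gap condition pins $(X,U,V,p,q)$ to a primal--dual optimal pair of the bounded linear program~\eqref{opt: flow}), so that $\sup_{i,j}[G]_{ij}[q]_i$ is finite and any $\mu$ exceeding it works. Verifying that this supremum is indeed finite—and, if necessary, making the bound on $q$ explicit from the data $\{S,E,D,c,f\}$—is the step I expect to require the most care, since the remaining algebra is the routine big-$M$ case analysis above.
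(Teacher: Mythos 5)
Your core algebra is exactly the paper's proof. For \eqref{eqn: bilinear gq}\(\Rightarrow\)\eqref{eqn: linear gq} the paper also sets \([Y]_{ij}=[g]_i[q]_i[B]_{ij}\) (which equals your \([G]_{ij}[q]_i[B]_{ij}\), since \([B]_{ij}=1\) forces \([g]_i=[G]_{ij}\)), and for the converse it runs the same two-case check on \([B]_{ij}\in\{0,1\}\), which, as you correctly note, needs no lower bound on \(\mu\). The one point of divergence is the quantifier on \(\mu\): the paper's proof simply takes \(\mu=\max_{i,j}[q]_i[G]_{ij}\) \emph{after} the tuple in \eqref{eqn: bilinear gq} is given, i.e., it implicitly reads the proposition per-tuple, whereas you insist on a single \(\mu\) valid uniformly over all admissible \(q\) and try to extract such a bound from the problem data. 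Your reading is the more faithful one to the statement as written, so the concern you isolate is real; the paper sidesteps it rather than resolves it.

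However, the way you discharge that concern is incorrect: Assumption~\ref{asp: feasible} does \emph{not} confine \(q\) to a bounded set. The dual optimal set of a linear program can be unbounded, and in this problem it typically is. Concretely, whenever a candidate vertiport is unselected, \([g]_i=0\), the constraint \([DX\mathbf{1}_d]_i\leq[g]_i\) forces all incident air-link flows to zero; one can then increase \([q]_i\) arbitrarily while absorbing the change into \(U\) (dual feasibility \((c+p+D^\top q)\mathbf{1}_d^\top=E^\top V+U\) is preserved with \(U\geq 0\)), without disturbing complementary slackness or the dual objective, since \(g^\top q\) and \(q^\top DX\mathbf{1}_d\) are unchanged. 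Hence \(\sup_{i,j}[G]_{ij}[q]_i=+\infty\) over the set of tuples satisfying \eqref{eqn: bilinear gq}, and even over primal--dual optimal pairs of \eqref{opt: flow}, so no finite \(\mu\) works in the uniform sense you want. The statement that is both true and sufficient for the MPEC-to-MILP equivalence is weaker: there are only finitely many admissible selection matrices \(B\), and for each one the set of dual solutions attaining the equilibrium conditions is nonempty, so one may fix one representative \(q^{(B)}\) per \(B\) (for instance one minimizing \(f^\top p+g^\top q\), which exists) and choose \(\mu\) above the finitely many values \(\max_{i,j}[G]_{ij}[q^{(B)}]_i\). The MILP \eqref{opt: MILP} with this \(\mu\) cuts off MPEC-feasible points with larger \(q\), but this loses nothing: where \([q]_i\) can blow up, the incident flow and \([g]_i\) vanish, so such points never achieve a strictly better objective in \eqref{opt: MPEC}. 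Either make this finite-selection argument explicit, or restate the proposition per-tuple as the paper's own proof implicitly does; the boundedness claim as you wrote it is false.
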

\begin{proof}
See Appendix~\ref{app: prop 2}.
\end{proof}

Proposition~\ref{prop: bilinear} allows us to replace the bilinear function \(g^\top q\), appearing in the constraints of optimization~\eqref{opt: MPEC}, with  a linear function of an auxiliary matrix \(Y\).  

Our next step is to show the bilinear objective function of optimization \eqref{opt: MPEC} is also equivalent to a linear one. To this end, by using Proposition~\ref{prop: optimality} again we can show the following:
\begin{equation*}
     p^\top X\mathbf{1}_d=f^\top p,\enskip
    q^\top DX\mathbf{1}_d=g^\top q.
\end{equation*}
Next, thanks to Proposition~\ref{prop: bilinear}, we can further replace the inner product \(q^\top g\) with a linear function of the auxiliary matrix \(Y\). By combining these results together, we can replace the bilinear objective function in  \eqref{opt: MPEC} with a linear one.

Equipped with these results, we can reformulate optimization~\eqref{opt: MPEC} as the following equivalent mixed integer linear program, where \(\mu\) is a large enough positive scalar.

\vspace{1em}
\noindent\fbox{%
\centering
\parbox{0.96\linewidth}{
{\bf{Vertiport selection via MILP}}
\begin{equation}\label{opt: MILP}
    \begin{aligned}
        \underset{\substack{B, p, q, Y,\\
        X, U, V
        }}{\mbox{minimize}}\enskip  & c^\top X\mathbf{1}_d + f^\top p+\mathbf{1}_v^\top Y\mathbf{1}_c+\omega\mathbf{1}_v^\top (K\odot B)\mathbf{1}_c\\
       \mbox{subject to} \enskip
        & EX=S,\enskip X\mathbf{1}_d\leq f,\enskip DX\mathbf{1}_d\leq (G\odot B)\mathbf{1}_c,\\
        & (c+p+D^\top q)\mathbf{1}_d^\top=E^\top V+U\\
        & X\geq 0, \enskip U\geq 0, \enskip p\geq 0, \enskip q\geq 0,\\
        & c^\top X\mathbf{1}_d+f^\top p+\mathbf{1}_v^\top Y\mathbf{1}_c=\tr(V^\top S),\\
        & 0\leq G\odot (q \mathbf{1}_c^\top)-Y\leq \mu (\mathbf{1}_v\mathbf{1}_ {m}^\top-B),\\
        & B\mathbf{1}_c\leq \mathbf{1}_v,\enskip \mathbf{1}_v^\top (K\odot B)\mathbf{1}_c\leq \gamma,\\
        & 0\leq Y\leq \mu B, \enskip A\vect(B)\leq b,\\  & B\in\{0, 1\}^{n_v\times n_c}.
    \end{aligned}
\end{equation}
}%
}
\vspace{1em}

Optimization~\eqref{opt: MILP} is a MILP: its objective function and constraints only depend on linear function of the unknowns, and it contains binary unknown matrix \(B\). One can solve such MILP and obtain a global optimal solution using off-the-shelf optimization software.

For optimization~\eqref{opt: MILP} to be feasible, one needs to choose the value of scalar \(\mu\) to be large enough. In particular, the constraints in Proposition~\ref{prop: bilinear} imply that \(\mu\) needs to be an elementwise upper bound for matrix \(G\otimes (q\mathbf{1}_c^\top)\). Based on Assumption~\ref{asp: cap option}, one can empirically choose \(\mu= \overline{q}\max_i [G]_{i, n_c}\), where \(\overline{q}\in\mathbb{R}_{+}\) is an estimate of the maximum delay among all vertiports at equilibrium.

\section{Numerical experiments}
\label{sec: experiment}

We demonstrate our vertiport selection approach using the Anaheim ground transportation network model developed in \cite{transpnet}, which contains more than 400 nodes and 900 links. Our goal is to numerically demonstrate the effects of adding different vertiports to an existing ground transportation network in terms of traffic loading in the network.

\subsection{The Anaheim transportation network with additional air links}

The Anaheim ground transportation network model consists of a well-defined arterial grid system integrated with an extensive freeway system. See Fig.~\ref{fig: Anaheim net} for an illustration \footnote{The map image we used are generated by Mapbox \url{https://www.mapbox.com}.}. The model includes the data for 1) the incidence matrix, 2) the demand matrix, 3) the free travel time, and 4) the link capacity. Based on these data, we construct the Nesterov \& de Palma model for the ground transportation network, which is known to produce similar results as the Beckmann model \cite{chudak2007static}. 

\begin{figure}
    \centering
    \includegraphics[clip,width=0.9\columnwidth]{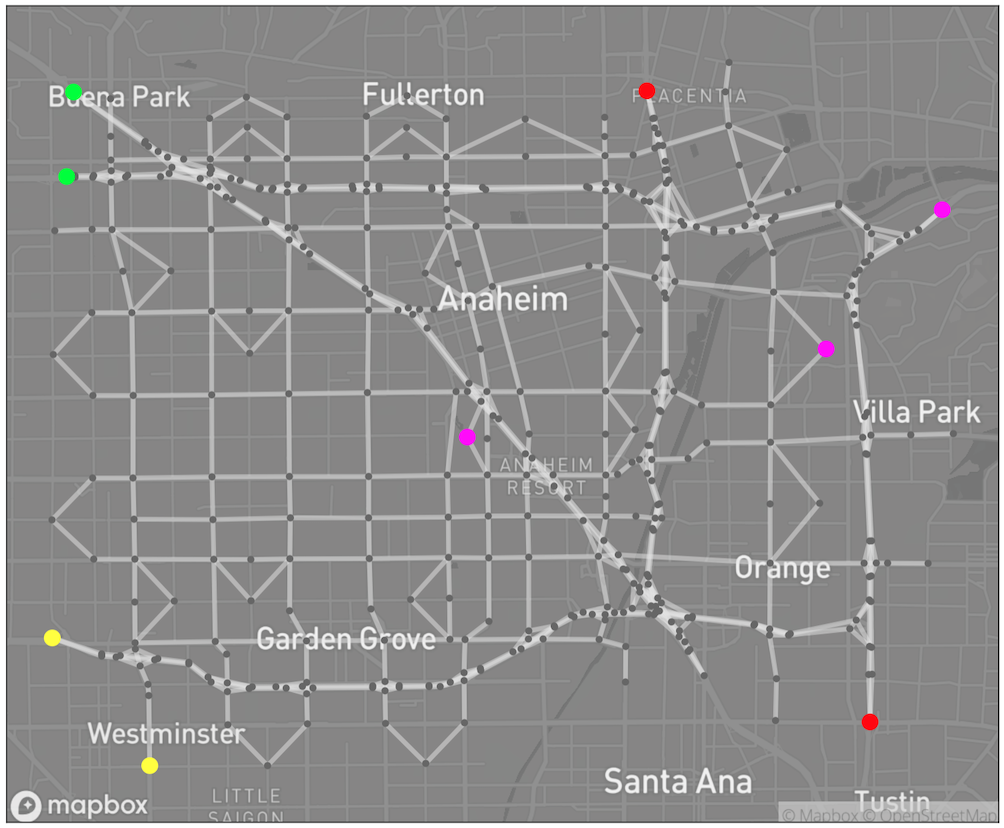}
    \caption{The Anaheim network where the candidate vertiport locations are marked with colored circles.}
    \label{fig: Anaheim net}
\end{figure}

In addition to the Anaheim ground transportation network, we construct an air transportation network as follows. Based on their location and travel demands, we choose 9 different destination nodes in the Anaheim network as candidate location for vertiports; see Fig.~\ref{fig: Anaheim net} for an illustration. The capacity of each vertiport can be either 600 or 1200 takeoffs and landing per hour; choosing these capacity will take \(1\) or \(2\) units of cost. We add an air link to each pair of vertiports if their physical distance is greater than the median of the pairwise distance of all the nodes in the Anaheim network. The free travel time of these air links are set to be proportional to the corresponding distance, and the flow capacity is fixed to be 80 flights per hour for all air links.

We also consider the following budget and logical constraints on the vertiport locations. First, the total selection budget \(\gamma\) is chosen such that \(\gamma\in[5, 11]\). Second, the locations marked in Fig.~\ref{fig: Anaheim net} are subject to the logical constraints listed in Tab.~\ref{tab: logical}. 

\begin{table}[!ht]
    \centering
    \caption{Logical constraints for vertiport locations marked in Fig.~\ref{fig: Anaheim net}}
   \begin{tabular}{c|c}
    \hline 
        Marker color & Constraints on the corresponding locations\\
    \hline     
        red & both are selected \\
        magenta & at least one is selected\\
        yellow & one and only one is selected\\
        green & one and only one is selected\\
    \hline
    \end{tabular}
    \label{tab: logical}
\end{table}

\subsection{Selection based on the Knapsack problem}

As a benchmark approach, we consider selecting vertiport locations using the variation of the Knapsack problem, a classical model in integer programs \cite[Sec. 1.3]{wolsey2020integer}. To this end, 
we define a \emph{value vector} \(w\in\mathbb{R}^{n_v}_+\), where \([w]_k\) denotes the 
value of the unit capacity at the \(k\)-th candidate vertiport. Based on this vector, we compute the selection matrix \(B\) in \eqref{eqn: B matrix} by solving the following mixed integer linear program:
\begin{equation}\label{opt: knapsack}
    \begin{array}{ll}
        \underset{g, B}{\mbox{maximize}} & w^\top g \\
        \mbox{subject to} & g=(B\odot G)\mathbf{1}_m, \enskip B\mathbf{1}_m\leq \mathbf{1}_v,\\
        & \mathbf{1}_v^\top (K\odot B)\mathbf{1}_m\leq \gamma, \enskip A\vect(B)\leq b,\\
        & \enskip B\in\{0, 1\}^{n_v\times n_m}.
    \end{array}
\end{equation}
Notice that optimization~\eqref{opt: knapsack} contains the the discrete capacity constraints in \eqref{eqn: B matrix}, the budget constraints in \eqref{eqn: budget}, the logical constraints in \eqref{eqn: logical}.

The difficulty in using optimization~\eqref{opt: knapsack} for vertiport selection is the estimation of the value vector.  Here we consider a heuristics estimate by choosing the elements in vector \(w\) to be the total traffic demand at the candidate vertiport; the idea behind this heuristics is that the value of the unit capacity at a candidate vertiport should increase with the travel demand: the higher the demand, the more beneficial to provide air travel as an alternative. In particular, we choose the candidate vertiport nodes \(\mathcal{V}=\{v(1), v(2), \ldots, v(n_v)\}\) from the set of destination nodes \(\{s(1), s(2), \ldots, s(n_d)\}\) such that there exists \(1\leq d \leq n_d\) with \(v(k)=s(d)\) for all \(k=1, 2, \ldots, n_v\). Furthermore, we let \([w]_k=S(s(d), d)\).

\subsection{Numerical comparison}

With the above choices of parameters, we solve optimization~\eqref{opt: MILP}.  To demonstrate our results, we define the following notion of \emph{link loading} for each link \(k=1, 2, \ldots, n_l\):
\begin{equation}
    \ell_k(X, p, q)=[c+p+Dq]_k[X\mathbf{1}_d]_k. 
\end{equation}
Intuitively, \(\ell_k\) denotes the number of vehicles traveling on link \(k\) at the equilibrium--which is also the summand in the total link loading defined in \eqref{eqn: loading}.

Fig.~\ref{fig: Anaheim loading} shows the link loading in the ground and air networks when we let choose the budget to be \(\gamma=8\). In this case, a total of six vertiports are selected, and only two of them has the larger capacity value 1200: the one near Westminster and the one near Villa Park; the latter fact is consistent with the air link loading distribution in Fig.~\ref{fig: Anaheim loading}: the vertiports near Westminster and Villa Park are connecting some of the flight legs with the highest loading, hence they necessarily need larger capacity.

\begin{figure}[!htp]
\centering
\subfloat[Air traffic network loading.]{%
  \includegraphics[clip,width=0.9\columnwidth]{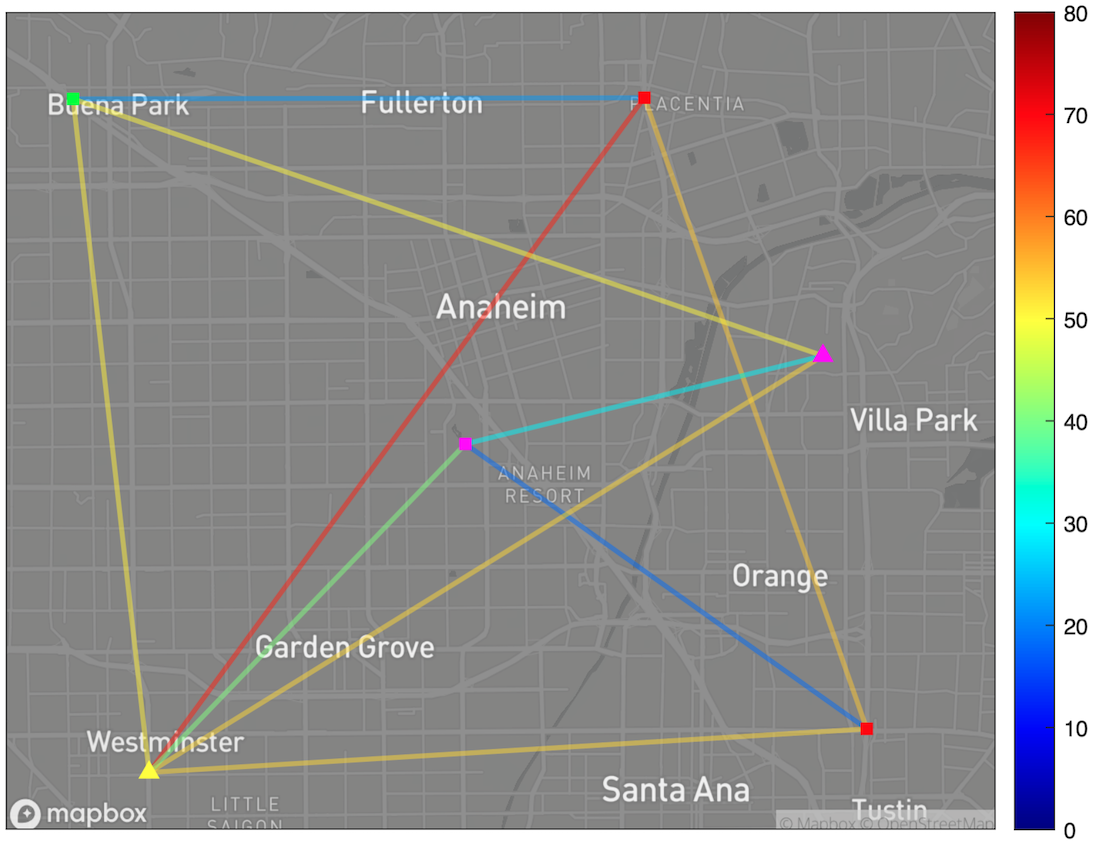}%
}

\subfloat[Ground traffic network loading.]{%
  \includegraphics[clip,width=0.9\columnwidth]{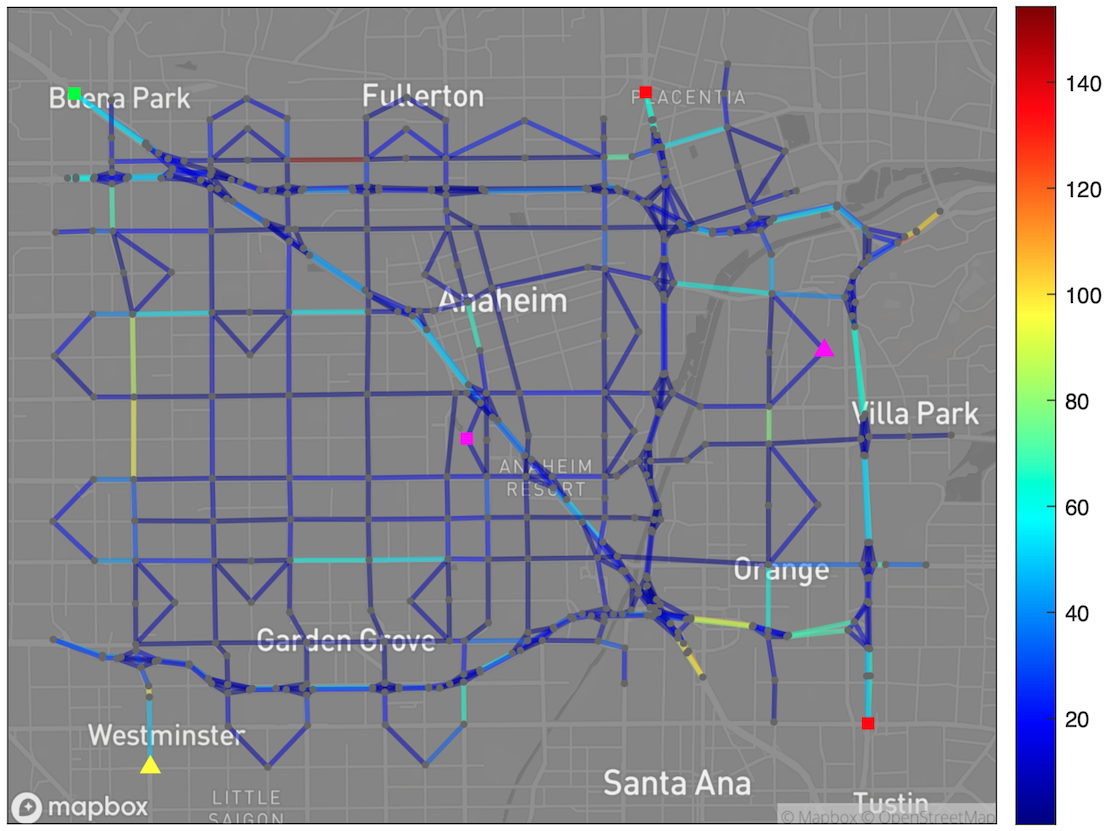}%
}

\caption{The optimal air and ground traffic network loading when vertiport selection budget \(\gamma=8\). The shape of the marker indicates the capacity of the corresponding vertiport: square marker means capacity value 600, costs one unit in the budget; triangle markers means capacity value 1200, which costs 2 units in the budget.}
\label{fig: Anaheim loading}
\end{figure}

We also show how does the budget value \(\gamma\) in vertiport selection affect the link loading in the ground traffic network. Intuitively, adding vertiports will reduce the ground link loading by providing alternative means of transportation. Furthermore, as the budget increases, the selected vertiports can support an air transportation network with larger volume of air traffic, and consequently, the ground link loading will decrease more. These intuitions are confirmed by Fig.~\ref{fig: curve} and Fig.~\ref{fig: bar}, which shows the sum of the link loading reduction in the ground network increases with the budget value, and so does the number of ground links with decreased loading. 

We also compare the performance of the results based on the in \eqref{opt: MILP} and the results based on the Knapsack problem in \eqref{opt: knapsack}; both of which contain 18 binary integer variables in this problem. Fig.~\ref{fig: curve} and Fig.~\ref{fig: bar} show that the MPEC approach is better than the Knapsack problem approach in terms of the total link loading reduction in the ground network as well as the number of ground links with decreased loading. These results confirm the advantage of the MPEC approach. We note that it may be possible for the Knapsack problem to produce results similar to those of the MPEC approach, via a better estimate of the value vector--rather than directly using the total travel demand--in optimization~\eqref{opt: knapsack}. However, to our best knowledge, there is no systematic method to compute these estimates. Hence MPEC is more useful for vertiport selection.

\begin{figure}[!htp]
    \centering
    \includegraphics[clip,width=\columnwidth]{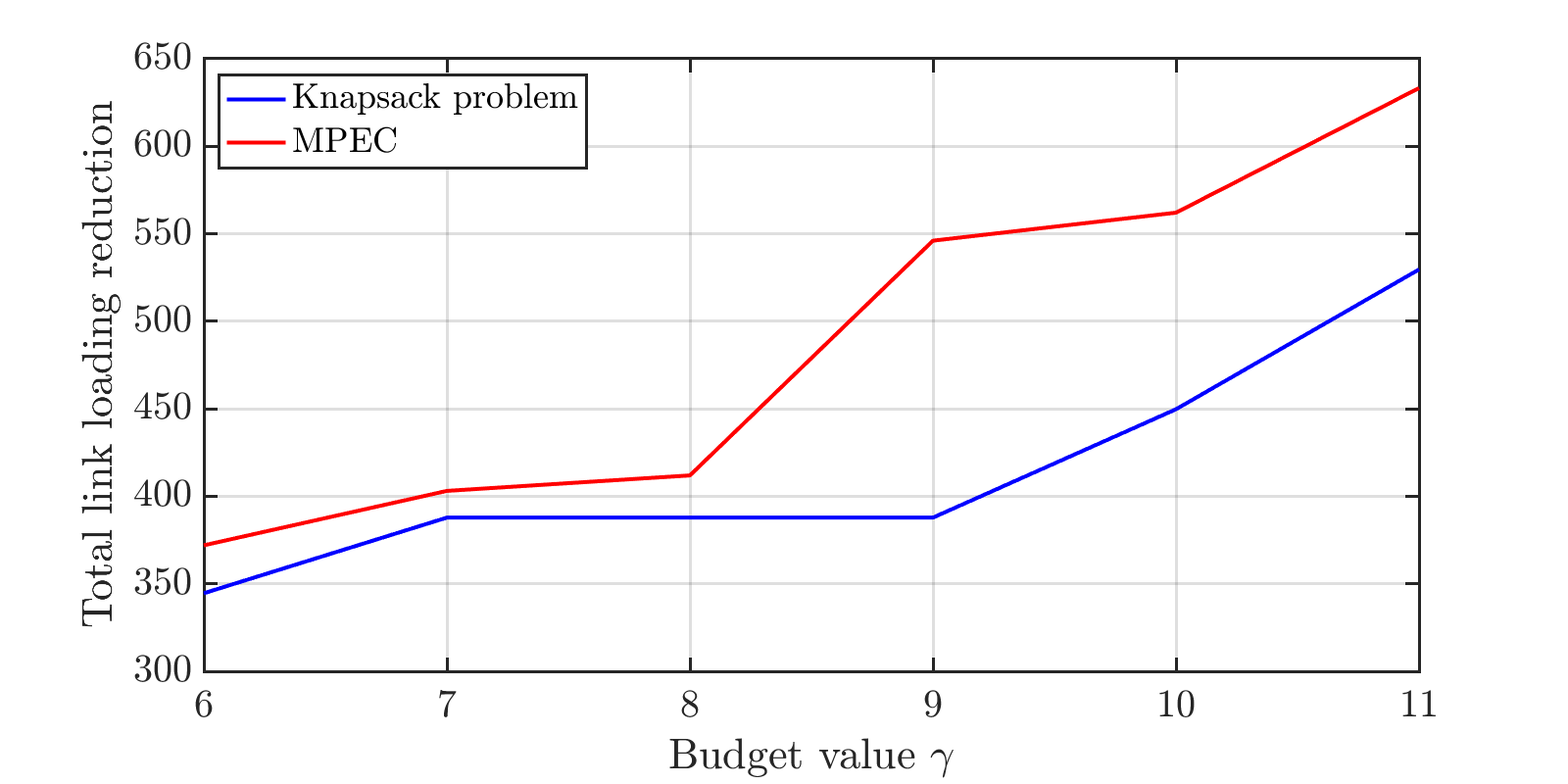}%
    \caption{The total link loading reduction in the ground network due to vertiport addition for different budget value \(\gamma\): a comparison between the MPEC approach and the Knapsack problem approach.}
    \label{fig: curve}
\end{figure}

\begin{figure}[!htp]
    \centering
    \includegraphics[clip,width=\columnwidth]{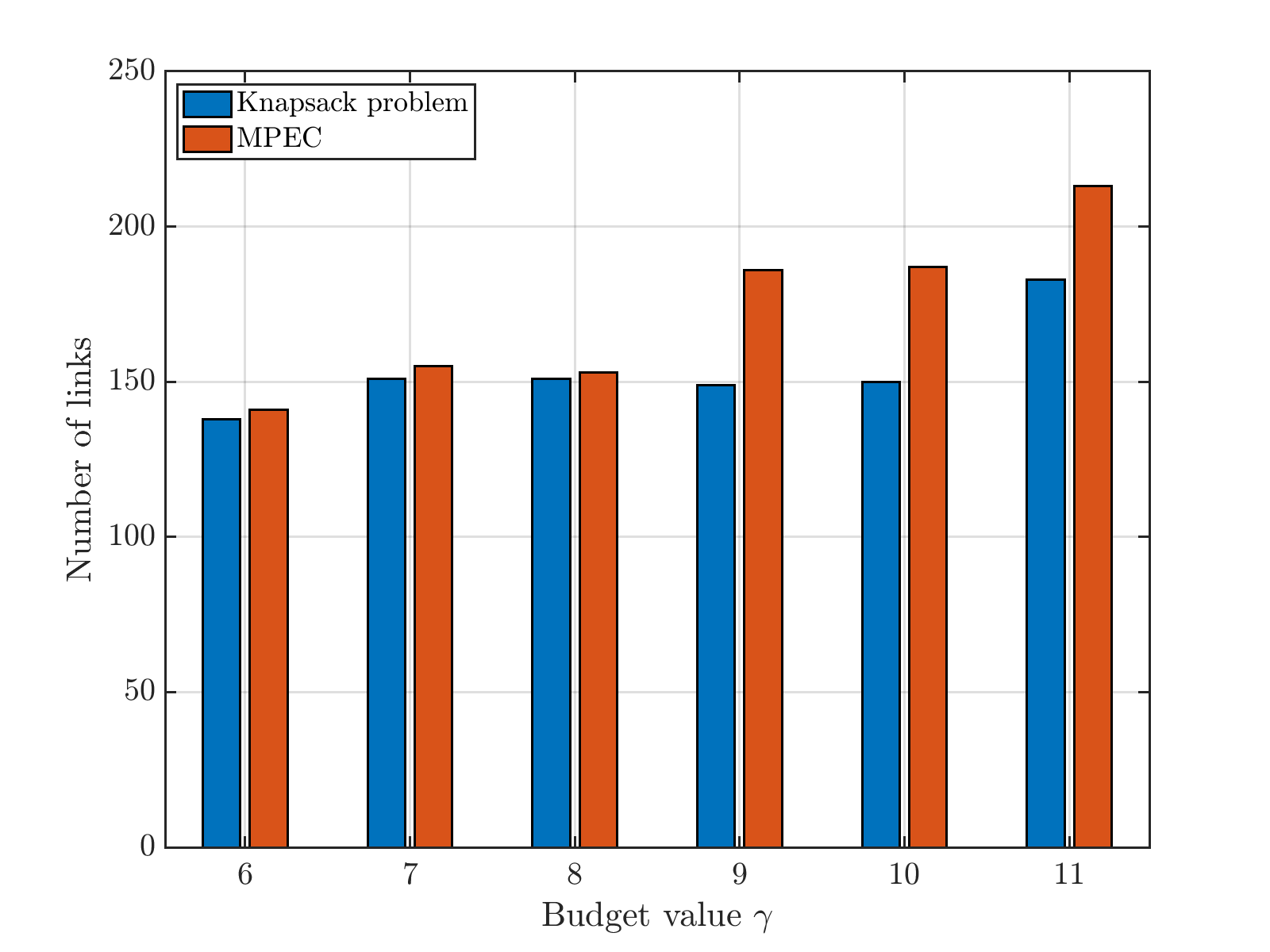}%
    \caption{The number of ground links whose link loading decrease due to vertiport addition with different budget value \(\gamma\): a comparison between the MPEC approach and the Knapsack problem approach.}
    \label{fig: bar}
\end{figure}

\section{Conclusion}
\label{sec: conclusion}

We introduce a mathematical model to select the optimal vertiport location and capacity for minimizing the traffic congestion in a hybrid air-ground transportation network. Our model is equivalent to a mixed-integer linear program, and we demonstrate this model using the Anaheim transportation network. 

Our work also opens some new research questions. For example, although the identification of the parameters for ground transportation networks--such as free travel time and link capacity--are well studied in the literature, similar results are still missing for the air transportation networks. In order to use the mathematical models we developed, it is important to identify these parameters using realistic air traffic data. Another example is to consider the impacts of different weather conditions in the vertiport selection problem. Since weather conditions are more likely to affect the operation of aircraft than automobiles, it is critical to ensure the air transportation network is robust against temporary capacity decrease caused by extreme weather conditions. We aim to answer these open questions in our future work.

\section{Appendix}

\subsection{Proof of Proposition~\ref{prop: optimality}}
\label{app: prop 1}

We start by deriving the dual of linear program \eqref{opt: flow}. Let the \emph{Lagrangian} be defined as
\begin{equation}\label{eqn: Lagrangian}
\begin{aligned}
    &L(X, U, V, p, q)=c^\top X\mathbf{1}_{d}-\tr(V^\top EX)+\tr(V^\top S)\\
    &-\tr(U^\top X)+p^\top (X\mathbf{1}_{d}-f)+q^\top (DX\mathbf{1}_{d}-g).
\end{aligned}
\end{equation}
The dual of linear program is given by 
\begin{equation}\label{opt: dual flow}
\begin{aligned}
  \underset{U, V, p, q}{\mbox{maximize}}\enskip & \psi(U, V, p, q)\\
  \mbox{subject to}\quad & U\geq 0, \enskip p\geq 0, \enskip q\geq 0.
\end{aligned}
\end{equation}
where \(\psi(U, V, p, q)=\min_X L(X, U, V, p, q)\). Since matrix trace is invariant under cyclic permutation, we have
\begin{equation*}
    \begin{aligned}
      & c^\top X\mathbf{1}_{d}=\tr(\mathbf{1}_{d}c^\top X),\enskip p^\top X\mathbf{1}_{d}=\tr(\mathbf{1}_{d}p^\top X),\\
      & q^\top DX\mathbf{1}_{d}=\tr(\mathbf{1}_{d}q^\top D X).
    \end{aligned}
\end{equation*}
Substitute the above equalities into \eqref{eqn: Lagrangian}, we can show the following
\begin{equation*}
\begin{aligned}
    &\frac{\partial}{\partial X} L(X, U, V, p, q)\\
    &=\frac{\partial}{\partial X}\tr( (\mathbf{1}_{d}(c^\top +p^\top +q^\top D)-V^\top E-U^\top) X)\\
    &=(c+p+Dq)\mathbf{1}_{d}-E^\top V-U
\end{aligned}
\end{equation*}
Since \(L(X, U, V, p, q)\) is a linear function of \(X\), we have \(\psi(U, V, p, q)=L(X, U, V, p, q)\) if and only if \(\frac{\partial}{\partial X} L(X, U, V, p, q)=0\). Therefore we can rewrite optimization \eqref{opt: dual flow} equivalently as follows
\begin{equation}\label{opt: dual flow 2}
\begin{aligned}
  \underset{U, V, p, q}{\mbox{maximize}}\enskip & \tr(V^\top S)-f^\top p-g^\top q\\
  \mbox{subject to}\quad & (c+p+Dq)\mathbf{1}_{d}^\top=E^\top V+U\\
  &U\geq 0, \enskip p\geq 0, \enskip q\geq 0.
\end{aligned}
\end{equation}
Using \cite[Thm. 1.3.3]{ben2001lectures}, we conclude that \(X\) and \(U, V, p, q\) are optimal for linear program \eqref{opt: flow} and \eqref{opt: dual flow 2}, respectively, if and only if the primal and dual feasibility condition in \eqref{eqn: primal-dual feasible} and the complementary slackness condition \eqref{eqn: complementary} are satisfied. Furthermore, the complementary slackness conditions in \eqref{eqn: complementary} are equivalent to the zero duality gap condition in \eqref{eqn: duality gap}.

\subsection{Proof of Corollary~\ref{cor: Wardrop} }

Since \(u^\star, u\in\mathcal{P}(i, s(j))\), by pre-multiplying equation~\eqref{eqn: primal-dual feasible} with \(u^\star\) and \(u\) and we can show the following:
\begin{subequations}\label{eqn: two route}
\begin{align}
    &\textstyle (u^\star)^\top \overline{c}=V_{ij}-V_{s(j), j}+\sum_{k=1}^{n_l}[u^\star]_j[U]_{kj},\\
   &\textstyle u^\top \overline{c}=V_{ij}-V_{s(j), j}+\sum_{k=1}^{n_l}[u]_j[U]_{kj}.
\end{align}
\end{subequations}
In addition, the constraints in \eqref{eqn: nonneg} and \eqref{eqn: complementary} together implies that \([U]_{kj}=0\) for all \(k\) such that \([X]_{kj}>0\). Combining this fact with the assumption that \([X]_{kj}>0\) for all \(k\) such that \([u^\star]_k=1\), we conclude that \([U]_{kj}=0\) for all \(k\) such that \([u^\star]_k=1\). Hence
\begin{equation}\label{eqn: slackness}
   \textstyle(u^\star)^\top \overline{c}=V_{ij}-V_{s(j), j}+\sum_{k=1}^{n_l}[u^\star]_j[U]_{kj}=V_{ij}-V_{s(j), j}.
\end{equation}
By combining \eqref{eqn: two route} and \eqref{eqn: slackness}, we obtain the following
\begin{equation*}
       \textstyle(u^\star)^\top \overline{c}=V_{ij}-V_{s(j), j}=u^\top \overline{c}-\sum_{k=1}^{n_l}[u]_j[U]_{kj}\leq u^\top \overline{c},
\end{equation*}
where the last step is because \(u\) and \(U\) are both elementwise nonnegative.

\subsection{Proof of Proposition~\ref{prop: bilinear}}
\label{app: prop 2}

First, suppose \(\delta, q, B\) and \(g\) satisfy the constraints in \eqref{eqn: bilinear gq}. Let \([Y]_{ij}=[g]_i[q_t]_i[B]_{ij}\) for all \(i=1, 2, \ldots, n_v\) and \(j=1, 2, \ldots, n_m\), and \(\mu= \max_{i, j}\,[q]_i [G]_{ij}\). Then one can verify that \(\delta, q, B\) and \(Y\) satisfy the constraints in \eqref{eqn: linear gq}.

Second, suppose \(\delta, q, B\) and \(Y\) satisfy the constraints in \eqref{eqn: linear gq} for some sufficiently large \(\mu\in\mathbb{R}_{++}\). The constraints \(B\in\{0, 1\}^{n_v\times n_m}\) and \(B\mathbf{1}_{n_m}\leq \mathbf{1}_{n_v}\) implies that each row of matrix \(B\) can have at most one entry equals one. Hence we can obtain an unique vector \(g\) by defining its \(i\)-th entry as follows:
\begin{equation}\label{eqn: g entry}
    [g]_i=\begin{cases}
    [G]_{ij}, & \text{if \([B]_{ij}=1\)},\\
    0, & \text{if \([B]_{ij}=0\) for all \(j=1, 2, \ldots, n_m\).}
    \end{cases}
\end{equation}
Next, since \(\mu\in\mathbb{R}_{++}\) is sufficiently large, an upper bound of \(\mu\) can be treated as redundant. As a result, if \([B]_{ij}=0\), then the constraints in \eqref{eqn: linear gq} implies that \([Y]_{ij}=0\) and \([G]_{ij}[q]_i\geq 0\). Since \(q\geq 0\) and \(G\geq 0\), the latter constraint is redundant. Furthermore, if \([B]_{ij}=1\), then the constraints in \eqref{eqn: linear gq} implies that
\[0\leq [Y]_{ij}, \enskip [G]_{ij}[q]_i=[Y]_{ij}.\]
By combining the above two cases with the definition in \eqref{eqn: g entry}, we conclude that \(\textstyle \sum_{i=1}^{n_v}\sum_{j=1}^{n_m}[Y]_{ij}=\sum_{i=1}^{n_v}[g]_i[q]_i\)
for all \(i=1, 2, \ldots, n_v\) and \(j=1, 2, \ldots, n_m\). Therefore, \(\delta, q, B\) and \(g\) satisfy the constraints in \eqref{eqn: bilinear gq}.



\section*{ACKNOWLEDGMENT} The authors would like to thank Rishabh Thakkar and Jorge Martinez Zapico for their help in obtaining traffic data, and Aditya Deole, Shahriar Talebi, and Kuang-Ying Ting for helpful discussions. 

\bibliographystyle{IEEEtran}
\bibliography{IEEEabrv,reference}

\end{document}